\newcommand{\Z}{\mathds{Z}}
\newcommand{\R}{\mathds{R}}
\newcommand{\A}{\bm{\mathcal{A}}}
\newcommand{\X}{\bm{\mathcal{X}}}
\newcommand{\Y}{\bm{\mathcal{Y}}}
\newcommand{\U}{\bm{\mathcal{U}}}
\newcommand{\D}{\bm{\mathcal{D}}}
\newcommand{\f}{\bm f}
\newcommand{\g}{\bm g}
\renewcommand{\t}{\bm\theta}
\renewcommand{\a}{\bm\alpha}
\renewcommand{\u}{\bm u}
\newcommand{\x}{\bm x}
\newcommand{\y}{\bm y}
\newcommand{\e}{\bm e}
\renewcommand{\b}{\bm b}
\newcommand{\h}{\bm h}
\renewcommand{\A}{\bm A}
\renewcommand{\r}{\bm r}
\newtheorem{theorem}{Theorem}
\newtheorem{lemma}{Lemma}
\newtheorem{remark}{Remark}
\newtheorem{definition}{Definition}
\newtheorem{proposition}{Proposition}
\newtheorem{example}{Example}
\newcommand{\mtj}{\mathtt{j}}
\newcommand{\mtx}{\mathtt{x}}
\newcommand{\mtu}{\mathtt{u}}
\newcommand{\mty}{\mathtt{y}}
\newcommand{\mtt}{\mathtt{\uptheta}}
\newcommand{\mtn}{\mathtt{n}}
\newcommand{\mtm}{\mathtt{m}}
\newcommand{\mti}{\mathtt{i}}
\def\BibTeX{{\rm B\kern-.05em{\sc i\kern-.025em b}\kern-.08em
    T\kern-.1667em\lower.7ex\hbox{E}\kern-.125emX}}
\begin{document}
\title{Extending Identifiability Results from Continuous to Discrete-Space Systems}

\author{Anuththara Sarathchandra, Azadeh Aghaeeyan, and Pouria Ramazi
\thanks{A. Sararthchandra, A. Aghaeeyan, and P. Ramazi are with the Department of Mathematics and Statistics, Brock University, Canada (e-mail: $\{$alekamalage, aaghaeeyan, pramazi$\}$@brocku.ca).}
 }
\maketitle
\thispagestyle{empty} 

\begin{abstract}
    Researchers develop models to explain the unknowns.
    These models typically involve parameters that capture tangible quantities, the estimation of which is desired.
    Parameter identifiability investigates the recoverability of the unknown parameters given the error-free outputs, inputs, and the developed equations of the model. 
    Different notions of and methods to test identifiability exist for dynamical systems defined in the continuous space.
    Yet little attention was paid to the identifiability of discrete space systems, where variables and parameters are defined in a discrete space.
    We develop the identifiability framework for discrete space systems and highlight that this is not an immediate extension of the continuous space framework. 
    Unlike the continuous case, local identifiability concepts are sensitive to how a ``neighborhood'' is defined.
    Moreover, results on algebraic identifiability that proved useful in the continuous space are less so in their discrete form as the notion of differentiability disappears. 
\end{abstract}

\begin{IEEEkeywords}
Identifiability, discrete-space, discrete-time, algebraic identifiability, linear threshold model. 
\end{IEEEkeywords}

\section{Introduction}
\label{sec:introduction}
\IEEEPARstart{T}{he} possibility of uniquely determining the parameters of a dynamical system by observing the inputs and outputs defines the \emph{identifiability} of that system. 
This property was investigated in, for example, biology \cite{raue2014comparison,wu2008parameter}, economics \cite{basmann1966application}, epidemiology  \cite{aghaeeyan2023revealing}, and control theory \cite{karlsson2012efficient, ramazi2014variance}.
In continuous-space systems, identifiability is defined \emph{analytically} by \emph{output equality}, that is whether the equality of two output trajectories implies that of the parameters \cite{walter1997identification}.
In \cite{grewal1976identifiability}, the same idea was
formulated as \emph{output distinguishability}, that is whether two different parameter values result in different output trajectories.
A useful technique to investigate the identifiability is to use \emph{differential algebra}, resulting in the notion of \emph{algebraic identifiability}, that is whether the parameter values can be expressed uniquely in terms of the inputs, outputs, and their time derivatives \cite{eisenberg2013identifiability,ljung1994global,saccomani2003parameter}.



Despite the wide applications of continuous space systems, there are cases where the state and parameter spaces are restricted to a discrete space. 
There are, however, challenges to extend the existing identifiability concepts and results in the continuous space to the discrete space.
Systems defined in discrete spaces lose the useful properties of differentiability.
Thus, some available approaches to investigate identifiability based on, for example, the Jacobian matrix, fail in discrete space systems.
Furthermore, in the continuous case, local identifiability notions are not sensitive to how the ``neighborhoods'' are defined. 
It suffices to show that the system is identifiable in a small enough neighborhood.
This is often not the case with the discrete case, because the neighborhood of a point cannot be arbitrarily small. 
Thus, local identifiability notions depend on the topology of a neighborhood in the discrete space, which is not uniquely defined. 

We aim to extend identifiability notions and methods from continuous to discrete space systems.
Our contribution is four-fold:
First, we justify the need for a separate framework for the discrete space and explain the subtleties associated with the notion of a discrete neighborhood--\cref{sec_problemFormulation}.
Second, we show that analytical definitions of identifiability, which are based on the output equality approach, can be adjusted to the discrete-space systems with minor changes--\cref{section Identifiability using output equality}.
Third, we develop the discrete space algebraic identifiability definitions and show that they are not a ready extension of their continuous space counterpart, especially those that depend on differentiation, such as the Jacobian, and that they may not be as useful--\cref{section Algebraic identifiability}.
Finally, we show that algebraic and local structural identifiability imply each other (\cref{theorem_relation between ALg Id and structural Id}) and provide the results that guarantee global identifiability based on the input-output equation--\cref{sec_results}.

\section{Problem formulation} \label{sec_problemFormulation}
Consider the discrete time and discrete space systems
\vspace{-5pt}
\begin{equation}\label{equation_model}
    \left\{
    \begin{aligned}
        &\x(t+1)=\f(\x(t), \u(t), \t), \quad \x_0=\x(0) \\
        &\y(t)=\g(\x(t), \t),  
    \end{aligned}
    \right.
\end{equation}
where $t\in\Z_{\geq0}$ denotes time, 
$\x \in \X$,  $\u \in \U$, $\y \in \Y$, and
$\t \in \mathbf{\Theta}$ are respectively, the vector of states, inputs, outputs, and the parameters, 
$\X\subseteq\mathbb{R}^{\mtx}$, 
$\U\subseteq\mathbb{R}^{\mtu}$,
$\Y\subseteq\mathbb{R}^{\mty}$, and
$\mathbf{\Theta}\subseteq\mathbb{R}^{\mtt}$ are the state, input, output, and parameter space for some positive integers $\mtx, \mtu, \mty, \mtt$, and $\f:\X\to\X$ and $\g:\X\to\Y$.
We assume that spaces $\X,\U,\Y,\mathbf{\Theta}$ are \emph{discrete} in the metric space $(\mathbb{R}^{\mti},d)$, for $\mti=\mtx, \mtu, \mty, \mtt$, respectively, and $d$ is the Euclidean distance \cite{bryant1985metric}.
This means that for every element of the space, there exists some $\delta>0$ such that the distance to any other element of the space exceeds $\delta$.
Should $\delta$ be independent of the elements, the (metric) space is \emph{uniformly discrete}.
For example, the set $\{1,\tfrac{1}{2},\tfrac{1}{4},\frac{1}{8}\ldots\}$ is discrete but not uniformly discrete, and the set $\frac{1}{n}\mathbb{Z}$ is uniformly discrete for $n\in\Z_{>0}$. 

\begin{example}\label{ex_Coordinators}
    Consider a population of $n$ decision makers who over time $t=1,2,\ldots$ choose whether to adopt some innovation, e.g., solar panels or augmented reality. 
    Each individual is either a \emph{weak coordinator}, who decides to adopt if at least half of the population has already done so, or a \emph{strong coordinator}, who decides to adopt only if at least one-fourth of the population has already done so. 
    The population proportions of the strong and weak coordinators are unknown and parameterized by $\theta_1$ and $\theta_2$ and together by the vector $\t = [\theta_1,\theta_2]^\top$ which belongs to the discretized simplex
    $\mathbf{\Theta} = \{ [\theta'_1,\theta'_2]^\top: \theta'_1+\theta'_2 =1, \theta'_1,\theta'_2\in\{0,\tfrac{1}{n},\tfrac{2}{n},\ldots,1\} \}$.
    The population state is defined by $\x=[x_1,x_2]^\top$ where $x_1$ and $x_2$ are the numbers of weak and strong coordinators who have adopted the innovation, divided by $n$.
    The state space is $\X = \{ \x'\in\mathbf{\Theta}: x'_1\leq \theta_1, x'_2\leq\theta_2 \} $.
    Both spaces are uniformly discrete.
    At each time step, all individual simultaneously revise their decisions, resulting in the autonomous dynamics
    \vspace{-2pt}
    \begin{equation} \label{eq:Coordinators_Dynamics}
        \begin{cases}
             \x(t+1) = [\theta_1 h(\x(t);\frac{1}{4}), \theta_2 h(\x(t);\frac{1}{2})]^\top, \\
            y(t) =x_1(t)+x_2(t),
        \end{cases}
    \end{equation}
    where
    $ h(\x(t);\tau)$ is the step function which is equal to 1 if $x_1(t)+x_2(t) \geq \tau$ and 0 otherwise.
\end{example}

We are interested in the parameter \emph{identifiability} of system \eqref{equation_model}.
It is defined both globally and locally.
\emph{Global identifiability} requires the uniqueness of the parameter values in the whole parameter space $\mathbf{\Theta}$
whereas \emph{local identifiability} consider a neighborhood of a parameter value $\t \in \mathbf{\Theta}$ \cite{nomm2004identifiability}.
We first present some possible neighborhoods in the discrete space.

\subsection{Neighborhoods in the discrete space}
\begin{example}\label{example_Neighborhood-1}
    Consider the parameter space $\mathbf{\Theta} = \Z^2$.
    The neighborhoods of a point $\t_0$ in the space can be defined by using the notion of a \emph{discrete ball with radius $r$}, that is the set of all points that lie within the distance of $r$ from $\t_0$ denoted by
    $B(\t_0,r)=\{ \t \in \mathbf{\Theta} \mid \| \t_0-\t \| \leq r\}$.
    Different neighborhoods of the point $\t_0\in\Z^2$ are formed by changing the value of $r$ (\cref{fig:Type 2}).
\end{example}

Parameters often need to meet specific conditions for feasibility, leading to a structured parameter space.
\setcounter{example}{0}
\begin{example}[revisited]\label{example_Neighborhood_Simplex} 
    The neighborhood of any point $\t\in\mathbf{\Theta}$ is restricted to a subset of $\mathbf{\Theta}$.
    \cref{fig:Type 3} is an example for $n=4$.
    Then for the parameter value $\t = [\tfrac{1}{2}, \tfrac{1}{2}]^\top$, the neighborhood of radius $\tfrac{1}{\sqrt{8}}$ would be
    $B([\tfrac{1}{2},\tfrac{1}{2}]^\top,\tfrac{1}{\sqrt{8}})\cap\mathbf{\Theta}$, which consists of the three points $[\tfrac{3}{4}, \tfrac{1}{4}]^\top$, $[\tfrac{1}{2},\tfrac{1}{2}]^\top$, and $[\tfrac{1}{4},\tfrac{3}{4}]^\top$.
    Any wider neighborhood will match the whole quantified simplex.
\end{example} 

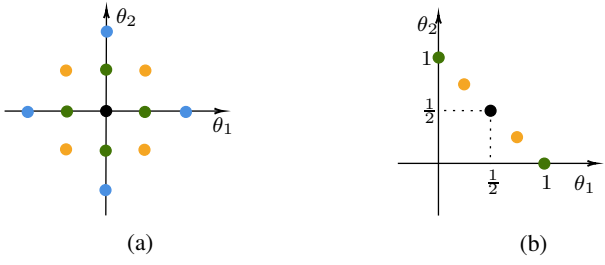
\begin{figure}[!tbp]
  \begin{subfigure}[b]{0.2\textwidth}
    \tikzset{every picture/.style={line width=0.5pt}} 
    \begin{tikzpicture}[x=0.5pt,y=0.5pt,yscale=-1,xscale=1]

\draw    (2.67,78) -- (166,78) ;
\draw [shift={(168,78)}, rotate = 180] [color={rgb, 255:red, 0; green, 0; blue, 0 }  ][line width=0.75]    (6.56,-1.97) .. controls (4.17,-0.84) and (1.99,-0.18) .. (0,0) .. controls (1.99,0.18) and (4.17,0.84) .. (6.56,1.97)   ;
\draw    (79.25,157) -- (79.74,3) ;
\draw [shift={(79.75,1)}, rotate = 90.18] [color={rgb, 255:red, 0; green, 0; blue, 0 }  ][line width=0.75]    (6.56,-1.97) .. controls (4.17,-0.84) and (1.99,-0.18) .. (0,0) .. controls (1.99,0.18) and (4.17,0.84) .. (6.56,1.97)   ;
\draw  [fill={rgb, 255:red, 0; green, 0; blue, 0 }  ,fill opacity=1 ] (75.41,77.85) .. controls (75.49,75.59) and (77.38,73.82) .. (79.65,73.89) .. controls (81.91,73.97) and (83.68,75.86) .. (83.61,78.13) .. controls (83.53,80.39) and (81.63,82.16) .. (79.37,82.09) .. controls (77.11,82.01) and (75.34,80.11) .. (75.41,77.85) -- cycle ;
\draw  [color={rgb, 255:red, 65; green, 117; blue, 5 }  ,draw opacity=1 ][fill={rgb, 255:red, 65; green, 117; blue, 5 }  ,fill opacity=1 ] (104.75,78.52) .. controls (104.82,76.26) and (106.72,74.48) .. (108.98,74.56) .. controls (111.24,74.64) and (113.02,76.53) .. (112.94,78.79) .. controls (112.86,81.06) and (110.97,82.83) .. (108.7,82.75) .. controls (106.44,82.68) and (104.67,80.78) .. (104.75,78.52) -- cycle ;
\draw  [color={rgb, 255:red, 65; green, 117; blue, 5 }  ,draw opacity=1 ][fill={rgb, 255:red, 65; green, 117; blue, 5 }  ,fill opacity=1 ] (75.41,46.52) .. controls (75.49,44.26) and (77.38,42.48) .. (79.65,42.56) .. controls (81.91,42.64) and (83.68,44.53) .. (83.61,46.79) .. controls (83.53,49.06) and (81.63,50.83) .. (79.37,50.75) .. controls (77.11,50.68) and (75.34,48.78) .. (75.41,46.52) -- cycle ;
\draw  [color={rgb, 255:red, 65; green, 117; blue, 5 }  ,draw opacity=1 ][fill={rgb, 255:red, 65; green, 117; blue, 5 }  ,fill opacity=1 ] (45.75,78.18) .. controls (45.82,75.92) and (47.72,74.15) .. (49.98,74.23) .. controls (52.24,74.3) and (54.02,76.2) .. (53.94,78.46) .. controls (53.86,80.72) and (51.97,82.5) .. (49.7,82.42) .. controls (47.44,82.34) and (45.67,80.45) .. (45.75,78.18) -- cycle ;
\draw  [color={rgb, 255:red, 65; green, 117; blue, 5 }  ,draw opacity=1 ][fill={rgb, 255:red, 65; green, 117; blue, 5 }  ,fill opacity=1 ] (75.08,107.85) .. controls (75.16,105.59) and (77.05,103.82) .. (79.31,103.89) .. controls (81.58,103.97) and (83.35,105.86) .. (83.27,108.13) .. controls (83.2,110.39) and (81.3,112.16) .. (79.04,112.09) .. controls (76.78,112.01) and (75,110.11) .. (75.08,107.85) -- cycle ;
\draw  [color={rgb, 255:red, 74; green, 144; blue, 226 }  ,draw opacity=1 ][fill={rgb, 255:red, 74; green, 144; blue, 226 }  ,fill opacity=1 ] (135.75,78.52) .. controls (135.82,76.26) and (137.72,74.48) .. (139.98,74.56) .. controls (142.24,74.64) and (144.02,76.53) .. (143.94,78.79) .. controls (143.86,81.06) and (141.97,82.83) .. (139.7,82.75) .. controls (137.44,82.68) and (135.67,80.78) .. (135.75,78.52) -- cycle ;
\draw  [color={rgb, 255:red, 74; green, 144; blue, 226 }  ,draw opacity=1 ][fill={rgb, 255:red, 74; green, 144; blue, 226 }  ,fill opacity=1 ] (16.08,78.52) .. controls (16.16,76.26) and (18.05,74.48) .. (20.31,74.56) .. controls (22.58,74.64) and (24.35,76.53) .. (24.27,78.79) .. controls (24.2,81.06) and (22.3,82.83) .. (20.04,82.75) .. controls (17.78,82.68) and (16,80.78) .. (16.08,78.52) -- cycle ;
\draw  [color={rgb, 255:red, 245; green, 166; blue, 35 }  ,draw opacity=1 ][fill={rgb, 255:red, 245; green, 166; blue, 35 }  ,fill opacity=1 ] (105.08,47.18) .. controls (105.16,44.92) and (107.05,43.15) .. (109.31,43.23) .. controls (111.58,43.3) and (113.35,45.2) .. (113.27,47.46) .. controls (113.2,49.72) and (111.3,51.5) .. (109.04,51.42) .. controls (106.78,51.34) and (105,49.45) .. (105.08,47.18) -- cycle ;
\draw  [color={rgb, 255:red, 245; green, 166; blue, 35 }  ,draw opacity=1 ][fill={rgb, 255:red, 245; green, 166; blue, 35 }  ,fill opacity=1 ] (45.08,47.18) .. controls (45.16,44.92) and (47.05,43.15) .. (49.31,43.23) .. controls (51.58,43.3) and (53.35,45.2) .. (53.27,47.46) .. controls (53.2,49.72) and (51.3,51.5) .. (49.04,51.42) .. controls (46.78,51.34) and (45,49.45) .. (45.08,47.18) -- cycle ;
\draw  [color={rgb, 255:red, 245; green, 166; blue, 35 }  ,draw opacity=1 ][fill={rgb, 255:red, 245; green, 166; blue, 35 }  ,fill opacity=1 ] (104.41,107.18) .. controls (104.49,104.92) and (106.38,103.15) .. (108.65,103.23) .. controls (110.91,103.3) and (112.68,105.2) .. (112.61,107.46) .. controls (112.53,109.72) and (110.63,111.5) .. (108.37,111.42) .. controls (106.11,111.34) and (104.34,109.45) .. (104.41,107.18) -- cycle ;
\draw  [color={rgb, 255:red, 245; green, 166; blue, 35 }  ,draw opacity=1 ][fill={rgb, 255:red, 245; green, 166; blue, 35 }  ,fill opacity=1 ] (45.08,106.85) .. controls (45.16,104.59) and (47.05,102.82) .. (49.31,102.89) .. controls (51.58,102.97) and (53.35,104.86) .. (53.27,107.13) .. controls (53.2,109.39) and (51.3,111.16) .. (49.04,111.09) .. controls (46.78,111.01) and (45,109.11) .. (45.08,106.85) -- cycle ;
\draw  [color={rgb, 255:red, 74; green, 144; blue, 226 }  ,draw opacity=1 ][fill={rgb, 255:red, 74; green, 144; blue, 226 }  ,fill opacity=1 ] (74.91,137.68) .. controls (74.99,135.42) and (76.88,133.65) .. (79.15,133.73) .. controls (81.41,133.8) and (83.18,135.7) .. (83.11,137.96) .. controls (83.03,140.22) and (81.13,142) .. (78.87,141.92) .. controls (76.61,141.84) and (74.84,139.95) .. (74.91,137.68) -- cycle ;
\draw  [color={rgb, 255:red, 74; green, 144; blue, 226 }  ,draw opacity=1 ][fill={rgb, 255:red, 74; green, 144; blue, 226 }  ,fill opacity=1 ] (75.91,17.68) .. controls (75.99,15.42) and (77.88,13.65) .. (80.15,13.73) .. controls (82.41,13.8) and (84.18,15.7) .. (84.11,17.96) .. controls (84.03,20.22) and (82.13,22) .. (79.87,21.92) .. controls (77.61,21.84) and (75.84,19.95) .. (75.91,17.68) -- cycle ;

\draw (158,80.4) node [anchor=north west][inner sep=0.75pt]  [font=\footnotesize]  {$\theta_1$};
\draw (85,-1.6) node [anchor=north west][inner sep=0.75pt]  [font=\footnotesize]  {$\theta_2$};

\end{tikzpicture}
    \caption{}
    \label{fig:Type 2}
  \end{subfigure}
  \hfill
  \begin{subfigure}[b]{0.2\textwidth}
    \tikzset{every picture/.style={line width=0.5pt}} 
    \begin{tikzpicture}[x=0.5pt,y=0.5pt,yscale=-1,xscale=1]
\draw    (0,120.5) -- (150,120.5) ;
\draw [shift={(150,120.5)}, rotate = 180] [color={rgb, 255:red, 0; green, 0; blue, 0 }  ][line width=0.75]    (6.56,-1.97) .. controls (4.17,-0.84) and (1.99,-0.18) .. (0,0) .. controls (1.99,0.18) and (4.17,0.84) .. (6.56,1.97)   ;
\draw    (30.74,160) -- (30.74,10) ;
\draw [shift={(30.75,8)}, rotate = 90.18] [color={rgb, 255:red, 0; green, 0; blue, 0 }  ][line width=0.75]    (6.56,-1.97) .. controls (4.17,-0.84) and (1.99,-0.18) .. (0,0) .. controls (1.99,0.18) and (4.17,0.84) .. (6.56,1.97)   ;
\draw  [color={rgb, 255:red, 65; green, 117; blue, 5 }  ,draw opacity=1 ][fill={rgb, 255:red, 65; green, 117; blue, 5 }  ,fill opacity=1 ] (106.75,120.52) .. controls (106.82,118.26) and (108.72,116.48) .. (110.98,116.56) .. controls (113.24,116.64) and (115.02,118.53) .. (114.94,120.79) .. controls (114.86,123.06) and (112.97,124.83) .. (110.7,124.75) .. controls (108.44,124.68) and (106.67,122.78) .. (106.75,120.52) -- cycle ;
\draw  [color={rgb, 255:red, 65; green, 117; blue, 5 }  ,draw opacity=1 ][fill={rgb, 255:red, 65; green, 117; blue, 5 }  ,fill opacity=1 ] (26.81,40.22) .. controls (26.88,37.96) and (28.78,36.19) .. (31.04,36.26) .. controls (33.3,36.34) and (35.08,38.24) .. (35,40.5) .. controls (34.92,42.76) and (33.03,44.54) .. (30.76,44.46) .. controls (28.5,44.38) and (26.73,42.49) .. (26.81,40.22) -- cycle ;
\draw  [color={rgb, 255:red, 245; green, 166; blue, 35 }  ,draw opacity=1 ][fill={rgb, 255:red, 245; green, 166; blue, 35 }  ,fill opacity=1 ] (46.08,60.52) .. controls (46.16,58.26) and (48.05,56.48) .. (50.31,56.56) .. controls (52.58,56.64) and (54.35,58.53) .. (54.27,60.79) .. controls (54.2,63.06) and (52.3,64.83) .. (50.04,64.75) .. controls (47.78,64.68) and (46,62.78) .. (46.08,60.52) -- cycle ;
\draw  [color={rgb, 255:red, 0; green, 0; blue, 0 }  ,draw opacity=1 ][fill={rgb, 255:red, 0; green, 0; blue, 0 }  ,fill opacity=1 ] (66.08,80.52) .. controls (66.16,78.26) and (68.05,76.48) .. (70.31,76.56) .. controls (72.58,76.64) and (74.35,78.53) .. (74.27,80.79) .. controls (74.2,83.06) and (72.3,84.83) .. (70.04,84.75) .. controls (67.78,84.68) and (66,82.78) .. (66.08,80.52) -- cycle ;
\draw  [color={rgb, 255:red, 245; green, 166; blue, 35 }  ,draw opacity=1 ][fill={rgb, 255:red, 245; green, 166; blue, 35 }  ,fill opacity=1 ] (86.08,100.52) .. controls (86.16,98.26) and (88.05,96.48) .. (90.31,96.56) .. controls (92.58,96.64) and (94.35,98.53) .. (94.27,100.79) .. controls (94.2,103.06) and (92.3,104.83) .. (90.04,104.75) .. controls (87.78,104.68) and (86,102.78) .. (86.08,100.52) -- cycle ;
\draw  [dash pattern={on 0.84pt off 2.51pt}]  (70.18,80.66) -- (70,121) ;
\draw  [dash pattern={on 0.84pt off 2.51pt}]  (70.18,80.66) -- (29.95,80.45) ;

\draw (106.33,127.74) node [anchor=north west][inner sep=0.75pt]  [font=\footnotesize]  {$1$};
\draw (15.67,33.74) node [anchor=north west][inner sep=0.75pt]  [font=\footnotesize]  {$1$};
\draw (64,123.4) node [anchor=north west][inner sep=0.75pt]  [font=\scriptsize]  {$\frac{1}{2}$};
\draw (16,69.4) node [anchor=north west][inner sep=0.75pt]  [font=\scriptsize]  {$\frac{1}{2}$};
\draw (12.75,7.4) node [anchor=north west][inner sep=0.75pt]  [font=\footnotesize]  {$\theta _{2}$};
\draw (131.5,127.4) node [anchor=north west][inner sep=0.75pt]  [font=\footnotesize]  {$\theta _{1}$};
\end{tikzpicture}
    \caption{}
    \label{fig:Type 3}
  \end{subfigure}
  \caption{\small
  \textbf{(a)} The neighborhood of $\t_0 = [0,0]^\top$ in $\Z^2$.
  \textbf{(b)} The one-dimensional simplex intersected with $\tfrac{1}{4}\Z^2$. 
    }
\end{figure}

\subsection{The need for a separate framework}
Instead of crafting a distinct framework to explore identifiability in discrete-space systems, one might propose substituting discrete-spaces with continuous ones and leveraging the existing continuous-space framework. 
However, the subsequent example underscores the limitations of this approach.
For simplicity, by identifiability here, we refer to the possibility of obtaining a unique value for the parameter of interest.
Rigorous definitions are provided in \cref{section Identifiability using output equality}.
\begin{example}\label{example_CounterExample_1_ForAlgebraicID}
    Consider the digital electrical circuit with n-bit precision in Fig \ref{fig_algebraic_example}, where D flip-flops are memory components that transfer data from $D$ to $Q$ at the rising edges of the clock signal ($\text{CLK}$). 
   Addition and multiplication are performed using blocks labeled by $+$ and $\times$, respectively.
   The square blocks represent n-bit latches used for storing constant parameter values.
    The system can be described by 
    \vspace{-2pt}
    \begin{equation*}
        \begin{cases}
            \x(t+1) = [x_1(t)^2 + \theta_2 x_2(t) + u(t), \theta_1 x_1(t)]^\top, \\
            y(t)=x_1(t), 
        \end{cases}
    \end{equation*}
    where $\x \in \Z^2$, $\t \in \Z^2_{\geq 0}$ with $\x(0)=[2,1]$, $u(0) = 3$, $u(1) = 0$ and we observe the outputs $y(0)=2$, $y(1)=8$, $y(2)=66$.
    The system equations yield
    \vspace{-3pt}
    \begin{equation} \label{example_CounterExample_1_ForAlgebraicID_eq1}
        y(t+2) = y(t+1)^2 + \theta_1\theta_2 y(t) + u(t+1),
    \end{equation}
    which gives $\theta_1\theta_2=(y(2)-y(1)^2 - u(1))/y(0)= 1$.
   The parameter values can only be non-negative integers as the n-bit latches are not storing floating points.
    Hence, the parameters can be recovered uniquely as $\theta_1 = \theta_2 = 1$.
    However, if the parameter space was not restricted, infinite values for $\theta_1$ and $\theta_2$ would be possible.
\end{example}

\begin{figure}
    \centering  

\tikzset{every picture/.style={line width=0.75pt}} 

\tikzset{every picture/.style={line width=0.75pt}} 

\begin{tikzpicture}[x=0.75pt,y=0.75pt,yscale=-1,xscale=.9]

\draw   (184,23) -- (199,23) .. controls (207.28,23) and (214,28.96) .. (214,36.3) .. controls (214,43.64) and (207.28,49.6) .. (199,49.6) -- (184,49.6) -- (184,23) -- cycle (174,27.43) -- (184,27.43) (174,45.17) -- (184,45.17) (214,36.3) -- (224,36.3) ;
\draw   (127,15.6) -- (127.21,78.61) -- (94.18,78.72) -- (93.98,15.71) -- cycle ;
\draw    (148,27.6) -- (174,27.43) ;
\draw    (128,36.6) -- (148.08,36.67) ;
\draw    (148,45.6) -- (174,45.17) ;
\draw    (148,27.6) -- (148,45.6) ;
\draw   (117.79,132.64) -- (118,196.6) -- (82.21,196.72) -- (82,132.76) -- cycle ;
\draw    (117,146.6) -- (155.8,146.87) ;
\draw   (125,156.6) -- (145.4,156.6) -- (145.4,177) -- (125,177) -- cycle ;
\draw    (145.4,164.6) -- (158.4,164.6) ;
\draw   (232.6,77) -- (243.6,77) .. controls (251.27,77.25) and (258.13,82.63) .. (261.2,90.8) .. controls (258.13,98.97) and (251.27,104.35) .. (243.6,104.6) -- (232.6,104.6) .. controls (237.32,96.06) and (237.32,85.54) .. (232.6,77) -- cycle (226,81.6) -- (234.8,81.6) (226,100) -- (234.8,100) (261.2,90.8) -- (270,90.8) ;
\draw    (224,36.3) -- (224.47,81.77) ;
\draw    (214,37) -- (224,36.6) ;
\draw    (221,141) -- (221,99.6) ;
\draw    (205.75,141) -- (221,141) ;
\draw    (314.5,11.5) -- (314,123.8) ;
\draw    (84.08,11.17) -- (314.5,11.5) ;
\draw    (84.08,11.17) -- (84,37.6) ;
\draw    (84,37.6) -- (94,37.6) ;
\draw    (148,45.6) -- (148.16,92.28) ;
\draw   (177.03,119.96) -- (156.63,120.01) -- (156.57,99.61) -- (176.97,99.56) -- cycle ;
\draw    (156,110.01) -- (143,110.04) ;
\draw    (205.75,141) -- (205.8,155.73) ;
\draw    (71,100.6) -- (98.11,100.82) ;
\draw    (71,100.6) -- (71,149.6) ;
\draw    (71,149.6) -- (81.67,149.75) ;
\draw   (165.8,142.43) -- (180.8,142.43) .. controls (189.08,142.43) and (195.8,148.39) .. (195.8,155.73) .. controls (195.8,163.07) and (189.08,169.03) .. (180.8,169.03) -- (165.8,169.03) -- (165.8,142.43) -- cycle (155.8,146.87) -- (165.8,146.87) (155.8,164.6) -- (165.8,164.6) (195.8,155.73) -- (205.8,155.73) ;
\draw   (138.02,114.38) -- (123.02,114.28) .. controls (114.74,114.23) and (108.06,108.23) .. (108.11,100.89) .. controls (108.15,93.55) and (114.91,87.63) .. (123.19,87.68) -- (138.19,87.78) -- (138.02,114.38) -- cycle (148.05,110.01) -- (138.05,109.95) (148.16,92.28) -- (138.16,92.21) (108.11,100.89) -- (98.11,100.82) ;
\draw   (94.18,78.72) -- (103.74,73.63) -- (94.09,68.72) -- (94.18,78.72) -- cycle (89.34,73.76) -- (94.14,73.72) ;
\draw   (82.21,196.72) -- (91.76,191.63) -- (82.12,186.72) -- (82.21,196.72) -- cycle (77.36,191.76) -- (82.16,191.72) ;
\draw    (35.34,73.16) -- (89.34,73.76) ;
\draw    (36,190.6) -- (77.36,191.76) ;
\draw    (35.34,73.16) -- (36,190.6) ;
\draw    (15.42,130.75) -- (35.67,130.88) ;
\draw    (220.75,100.1) -- (226,100) ;
\draw    (224.47,81.77) -- (226,81.6) ;
\draw   (276.6,110) -- (287.6,110) .. controls (295.27,110.25) and (302.13,115.63) .. (305.2,123.8) .. controls (302.13,131.97) and (295.27,137.35) .. (287.6,137.6) -- (276.6,137.6) .. controls (281.32,129.06) and (281.32,118.54) .. (276.6,110) -- cycle (270,114.6) -- (278.8,114.6) (270,133) -- (278.8,133) (305.2,123.8) -- (314,123.8) ;
\draw    (270,90.8) -- (270,114.6) ;
\draw    (270,133) -- (255.17,133) ;
\draw [color={rgb, 255:red, 0; green, 0; blue, 0 }  ,draw opacity=1 ]   (148.08,68.94) -- (156.91,68.94) ;

\draw (158.57,103.01) node [anchor=north west][inner sep=0.75pt]  [font=\scriptsize]  {$\theta _{1}$};
\draw (127,160) node [anchor=north west][inner sep=0.75pt]  [font=\scriptsize]  {$\theta _{2}$};
\draw (7,24) node [anchor=north west][inner sep=0.75pt]  [font=\scriptsize] [align=left] {};
\draw (103.73,78.62) node [anchor=north west][inner sep=0.75pt]  [font=\scriptsize,rotate=-269.81] [align=left] {{\scriptsize n-bit D flip-flop}};
\draw (131,15.4) node [anchor=north west][inner sep=0.75pt]  [font=\scriptsize]  {$x_{1}( k)$};
\draw (124,131.4) node [anchor=north west][inner sep=0.75pt]  [font=\scriptsize]  {$x_{2}( k)$};
\draw (12,119.5) node [anchor=north west][inner sep=0.75pt]  [font=\scriptsize] [align=left] {{\scriptsize CLK}};
\draw (92.76,196.62) node [anchor=north west][inner sep=0.75pt]  [font=\scriptsize,rotate=-269.81] [align=left] {{\scriptsize n-bit D flip-flop}};
\draw (189,30.83) node [anchor=north west][inner sep=0.75pt]  [font=\scriptsize]  {$\times $};
\draw (172,150.83) node [anchor=north west][inner sep=0.75pt]  [font=\scriptsize]  {$\times $};
\draw (116,95.83) node [anchor=north west][inner sep=0.75pt]  [font=\scriptsize]  {$\times $};
\draw (236.8,85) node [anchor=north west][inner sep=0.75pt]  [font=\scriptsize]  {$+$};
\draw (95,33) node [anchor=north west][inner sep=0.75pt]  [font=\scriptsize] [align=left] {D};
\draw (116.84,32.17) node [anchor=north west][inner sep=0.75pt]  [font=\scriptsize,rotate=-358.09] [align=left] {Q};
\draw (82,145) node [anchor=north west][inner sep=0.75pt]  [font=\scriptsize] [align=left] {D};
\draw (107.84,142.17) node [anchor=north west][inner sep=0.75pt]  [font=\scriptsize,rotate=-358.09] [align=left] {Q};
\draw (286,119.4) node [anchor=north west][inner sep=0.75pt]  [font=\scriptsize]  {$+$};
\draw (231.83,127) node [anchor=north west][inner sep=0.75pt]  [font=\scriptsize] [align=left] {{\scriptsize Input}};
\draw (158.83,62.4) node [anchor=north west][inner sep=0.75pt]  [font=\scriptsize]  {$y(k)$};
\end{tikzpicture}

\caption{\small A digital electrical circuit. 
The D flip-flop device makes the digital circuit sequential, i.e., the value at the Q port of a D flip-flop equals that of the D port at the previous time step.
The clock signal $\text{CLK}$ synchronizes the two flip-flops.
The parameters are stored in n-bit latches.
In practice, all operations are performed at the bit level; here, all devices operate using n-bit precision.}
\label{fig_algebraic_example}
\end{figure}
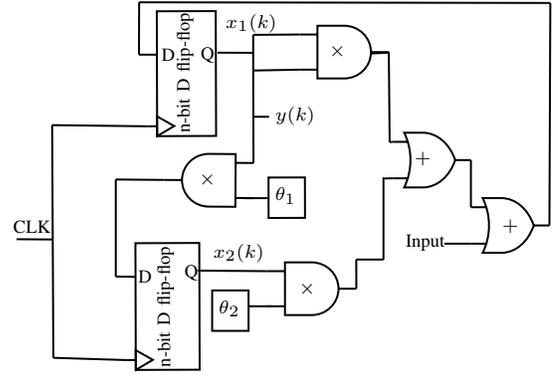
Now we proceed to the analytical definitions of identifiability in the discrete-time and space, which are often identical to those of discrete-time and continuous-space case.

\section{Identifiability: analytical definitions} \label{section Identifiability using output equality}
The main definition of identifiability is based on output equality.
The idea behind this definition is that, for identifiable parameters, same output trajectories cannot be generated from identical systems that differ only in their parameter values. 
In other words, a parameter value is identifiable if it is  \emph{distinguishable} from other values, that is, if they result in different output (solution) trajectories \cite{grewal1976identifiability}. 
In continuous-time systems, the notion of solution trajectory is defined as the set of pairs $(t,\y(t))$ for $t\in\mathbb{R}$. 
In discrete-time systems, one can use the output sequence denoted by $\langle \y(t)\rangle_{t=0}^T$ or simply $\langle \y(t)\rangle_{0}^T$ for some $T>0$. 
To emphasize the role of the initial condition, input, and parameter in the output, we use the notation 
$\langle\y(t,\x_0, \langle\u(t')\rangle_{0}^{t-1}, \t)\rangle_{0}^T$ or simply
$\langle\y(t,\x_0, \langle\u(t)\rangle_{0}^{T-1}, \t)\rangle_{0}^T$.

\subsection{Local identifiability}
The notion of locally strong identifiability was first introduced in \cite{tunali1987new} for continuous-time and space systems. 
The following is the equivalent discrete-time and space with minor changes, including the initial time being set to $0$ rather than a general $t_0$ value.
For integer $T>0$, define the set power $\U^T=\U\times\ldots\times\U$.
\begin{definition}
\label{definition_LocalId}
    System \eqref{equation_model} is \textbf{locally strongly $\x_0$-identifiable at $\t\in\mathbf{\Theta}$} 
    if there exists a neighborhood $\mathbf{\Theta}'\subset\mathbf{\Theta}$ of $\t$, such that for any $\t,\t'\in \mathbf{\Theta}'$ 
    there is an input sequence $\u^*=\langle\u(t)\rangle_0^{T-1}\in\U^{T}$, such that for every time $T>0$, 
    \begin{equation}\label{Eq_distinguishability}
        \scalebox{.98}{$
        \t \neq \t'
        \Rightarrow 
        \langle \y(t,\x_0,\u^*,\t)\rangle_0^T 
            \neq \langle \y(t,\x_0,\u^*,\t')\rangle_0^T.
            $}
    \end{equation}
\end{definition}
\begin{remark}\label{re_theDomainofUinLocallyStrongIdentifiability}
     In \eqref{equation_model}, the output is a function of state $\x(t)$, not the input.
    Moreover, $\x(t)$ depends on $\u(t-1)$ not $\u(t)$.
    Thus, the input sequence $\langle\u(t)\rangle_0^{T-1}$ together with the initial condition $\x_0$ and parameter $\t$ uniquely determine the output sequence $\langle \y(t,\x_0,\u^*,\t)\rangle_0^T$.
\end{remark}

Since \eqref{Eq_distinguishability} must hold for any $T>0$ in \cref{definition_LocalId}, it must also hold for $T=1$, i.e., the smallest possible value of $T \in \Z_{\geq 0}$. 
Moreover, if it holds for $T=1$, then it also holds for any $T>0$. 
Thus, in view of \cref{re_theDomainofUinLocallyStrongIdentifiability},
\cref{definition_LocalId} can be simplified to the following, implying that the output distinguishability should only be checked for $t=0$ and 1.
\begin{definition}
\label{definition_LocalId_2}
    System \eqref{equation_model} is \textbf{locally strongly $\x_0$-identifiable at $\t\in\mathbf{\Theta}$} 
    if there exists a neighborhood $\mathbf{\Theta}'\subset\mathbf{\Theta}$ of $\t$, such that for any $\t,\t'\in \mathbf{\Theta}'$ 
     there is an input $\u(0)\in\U$, such that \eqref{Eq_distinguishability} holds for $T=1$.   
\end{definition}
\begin{remark}\label{re_theValueofTinLocallyStrongIdentifiability}
    The simplification in \cref{definition_LocalId_2} is merely due to the discrete-time nature of the dynamics. 
    This definition is restrictive since the dimension of the parameter space can be at most two times the dimension of the output, i.e., $\mtt\leq2\mty$ in order for the system to be identifiable. 
    This motivates the less restrictive \cref{de_identifiabilityThrough}.
\end{remark}

\begin{definition}[\cite{nomm2016further,anstett2020priori}] \label{de_identifiabilityThrough}
     System \eqref{equation_model} is \textbf{locally strongly $\x_0$-identifiable at $\t\in\mathbf{\Theta}$ through the input sequence $\u^*=\langle\u(t)\rangle_{t=0}^{T-1}\in\U^T$} for some $T>0$,
    if there exists a neighborhood $\mathbf{\Theta}'\subset\mathbf{\Theta}$ of $\t$, such that for any $\t,\t'\in \mathbf{\Theta}'$, \eqref{Eq_distinguishability} holds.
\end{definition}

The following example shows how a system can be locally identifiable only at some initial conditions.
It also highlights the role of the neighborhood.
\begin{example} \label{example_stronglyLocallyIdentifiable}
    Consider the discrete-time and discrete-space system defined by $x(t+1)= \theta^2 x(t)$ and $y(t)=  x(t)$,
    where $x,y,\theta \in \Z$.
    For $x_0=0$, $\theta \neq \theta'$ does not imply $y(t,x,\theta)\neq y(t,x,\theta')$. 
    Hence, for all $\theta \in {\Theta}$, the system is not locally strongly $0$-identifiable at $\theta$.
    Nevertheless, for all other $x_0\neq0$, the systems is locally strongly $x_0$-identifiable at any $\theta\neq0$ in $B(\theta,1)$.
    The only case where two different $\theta$ and $\theta'$ can result in the same output trajectory is when $\theta=-\theta'$; however, the two then do not lie in the same ball $B(\theta,1)$ for $\theta\neq0$.
    The same is not concluded for the neighborhood $B(\theta,2)$ as then for $\theta=1$, the ball includes the non-distinguishable parameter values $1$ and $-1$.
\end{example} 

\subsection{Local structural identifiability}
The notion of locally strong $\x_0$-identifiability is defined for a given $\x_0$.
The notion of \emph{structural identifiability} requires the system to be strongly locally $\x_0$-identifiable for all initial conditions $\x_0$ in the state space except for those of measure zero \cite{eisenberg2013input,saccomani2003parameter, anstett2020priori}. 
It additionally requires the existence of some input space $\U^T$ such that the system is strongly locally $\x_0$-identifiable for every input sequence in $\U^T$.
A weak notion of structural identifiability was presented in \cite{nomm2004identifiability,nomm2016further} where instead of the whole state space, it requires identifiability for initial conditions in a dense subset of the state space only. 
\begin{definition}
\label{definition_StructuralID}
    System \eqref{equation_model} is \textbf{locally structurally identifiable} 
    if there exist a $T>0$ and subsets 
    $\mathbf{\Theta}'\subseteq\mathbf{\Theta}$, 
    $\X'\subseteq\X$, and
    $\U'\subseteq\U^T$,
    such that the system is locally strongly $\x_0$-identifiable at $\t$ through the input sequence $\langle\u(t)\rangle_0^{T-1}$ 
    for almost all
    $\t\in\mathbf{\Theta}'$, 
    $\x_0 \in \X'$, and 
    $\langle\u(t)\rangle_0^{T-1}\in\U'$.    
\end{definition} 



\subsection{Global identifiability}
\begin{definition}[\cite{saccomani2003parameter}]
    System \eqref{equation_model} is \textbf{globally identifiable} at $\t$
    if there exist a $T>0$ and an input sequence $\u^*=\langle\u(t)\rangle_0^{T-1}$ such that \eqref{Eq_distinguishability} holds for all $\t' \in \mathbf{\Theta}$ and all $\x_0\in\X$.
\end{definition}

\setcounter{example}{0}
\begin{example} [revisited]
    \label{ex_local_ID_of_Coordinting_Example}
    Let $n=4$.
    For $\x_0=[\frac{1}{4},0]^\top$ or $[0,\frac{1}{4}]^\top$,
    the output trajectory equals $\langle \frac{1}{4}, \theta_1, \theta_1+\theta_2, \theta_1+\theta_2, \ldots \rangle$ 
    if $\theta_1\geq \frac{1}{2}$ and $\langle \frac{1}{4}, \theta_1, \theta_1, \ldots \rangle$ otherwise, 
    which are unique for each $\t \in \mathbf{\Theta}$ implying that the system is locally strongly $\x_0-$identifiable at $\t$ for all $\t \in \mathbf{\Theta}$.
    It follows that the system is locally structurally identifiable (by setting $\mathbf{\Theta}' = \mathbf{\Theta}$ and $\X'=\{[\frac{1}{4},0],[0,\frac{1}{4}]\}$).
    This is not the case for other initial conditions as the output trajectory becomes the same for any $\t \in \mathbf{\Theta}$, and equals either 
    $\langle y(0),0,0,\ldots \rangle$ or $\langle y(0), \theta_1+\theta_2, \theta_1+\theta_2, \ldots \rangle$.
    Consequently, the system is not globally identifiable.
\end{example}


\section{Identifiability: algebraic definitions} \label{section Algebraic identifiability}
In analytic definitions, we infer about the equality of the parameter values using the equality of the corresponding two output trajectories.
The notion of \emph{algebraic identifiability} instead is based on constructing the so-called \emph{input-output equation}, also referred to as the \emph{algebraic equation}, in the form of $\bm{\Phi} = 0$, where $\bm{\Phi}$ 
consists of the parameters and time iterations of inputs and outputs. 
Then, the possibility of solving the input-output equation to uniquely determine the parameter values is investigated.

The main component of algebraic identifiability is the existence of the algebraic equation.
However, in order for the equation to have a solution for the parameter, different assumptions are made. 
In the original definition \cite{diop1991nonlinear}, which was for continuous-time systems, the parameter was supposed to be algebraic over the differential field of the input and output, which implies that $\bm{\Phi}$ is a polynomial of the input, output and their derivatives and the parameter. 
In some later work \cite{nomm2016further ,xia2003identifiability}, both for continuous and discrete-time systems, a different assumption was made to enforce the existence of a solution: the Jacobian of the function $\bm{\Phi}$ with respect to the parameters is supposed non-singular. 
This second definition is particularly useful when the input-output equation is not algebraic (polynomial); 
then by checking the Jacobian, which is often straightforward, identifiability is verified. 
However, neither a differential field nor the Jacobian are defined in the discrete space. 
One can use the equivalence \emph{difference field} instead of the differential field, resulting in an almost identical definition of algebraic identifiability in the discrete space.
However, for the Jacobian matrix, the extension is not as straightforward.

\subsection{Algebraic identifiability based on the difference field}
We first present a \emph{difference field} \cite{ritt1939ideal} that is the equivalence discrete-time version of a differential field \cite{diop1991nonlinear, anstett2020priori}. 
\begin{definition}[\cite{ritt1939ideal}] 
    \label{def_difference field}
    A difference field $K$ is a pair consisting of a commutative field $k$ and an automorphism $\delta:k \to k$ such that for all $a,b \in k$, it holds that
    $\delta(a+b)=\delta(a)+\delta(b)$ and $\delta(ab)=\delta(a)\delta(b)$.
\end{definition}

Let $q$ be the \emph{shift operator}, defined by $q^T \h(t) = \h(t-T)$ for any function $\h:\R\to\R^\mtn, \mtn\in\Z_{>0}$, where $T\in\Z_{>0}$.

\begin{definition} \label{def_differenceFieldWithShiftOperator}
    A shift-operator (difference-)field $K$ is a difference field with the shift operator as the automorphism. 
\end{definition}

The notation $K\langle \h(t) \rangle$ (or $K\langle \h \rangle$) is a polynomial ring, consisting of all polynomials of $\h(t)$ with coefficients from the field $K$. 
If $K$ is a shift-operator difference field, then the coefficients can turn $\h(t)$ to $\h(t-1), \h(t-2), \ldots$.
As a result, $K\langle \h(t) \rangle$ would include polynomials of $\h(t),\h(t-1),\ldots$.
\begin{definition}[in the sense of \cite{xia2003identifiability}]
\label{definition_Alg_IDWithDifferenceField}
    Let $K$ be a shift-operator difference-field.
    System \eqref{equation_model} is \textbf{algebraically identifiable} 
    if $\t$ is (transformally) algebraic over $K\langle \u,\y\rangle$.
\end{definition} 

\cref{definition_Alg_IDWithDifferenceField} means that $\t$ satisfies a non-zero polynomial $\bm \Phi$ of itself and $\u(t)$ and $\y(t)$ and their iterates:
    $
    \bm \Phi(\t, \langle \u(t) \rangle_{0}^{T-1},\langle \y(t,\langle \u(t') \rangle_{0}^{t-1},\t) \rangle_0^T)=\bm{0}
    $
for some $T>0$.
Similar to the continuous case, $\bm\Phi$ being a polynomial guarantees the existence of a finite number of solutions for $\t$, and hence, it is locally identifiable. 

\subsection{Algebraic identifiability using discrete Jacobian}
We define a \textit{discrete Jacobian matrix} using finite differences. 
The $i^{\text{th}}$ entry of a vector $\bm X\in\R^{\mtn}$ is denoted by $X_i$. 
\begin{definition}[Forward finite difference]
    Given the scalar function $\Phi:\D\to\R,\D\subseteq \R^{\mtn},\mtn\in\Z_{\geq1},$ the \emph{forward finite difference along $\mtj$ by $r$ units at $\t\in\D$}, where 
    $\mtj\in\{\mathtt{1},\ldots,\mtn\}$ and $r$ is such that $\t+r\e_{\mtj}\in\D$, is defined by
    $
        \Delta^{r,\mtj} \Phi (\t) 
        \triangleq \Phi(\t + r\e_{\mtj})-\Phi(\t) 
    $
    where $\e_{\mtj}$ is the $\mtj^{\text{th}}$ column of the $n\times n$ identity matrix.
\end{definition}

\begin{definition}[Discrete Jacobian]
    Given function $\bm\Phi:\D\to\R^{\mtm},\D\subseteq \R^{\mtt},\mtm,\mtt\in\Z_{\geq1},$
    point $\t\in \D$, 
    and vector $\r \in \R^{\mtt}$ satisfying $\t+\r\in\D$,  
    the \emph{discrete Jacobian of $\bm\Phi$ by $\r$ with respect to $\t$} is defined by 
    \begin{equation*}
        \Delta^{\r} \bm\Phi(\t) 
        \triangleq 
        \begin{bmatrix}
            \Delta^{r_{\mathtt{1}},\mathtt{1}} \Phi_{\mathtt{1}}(\t)  & \dots & \Delta^{r_{\mtt},\mtt} \Phi_1(\t) \\
            \vdots &\ddots&\vdots \\ 
            \Delta^{r_{\mathtt{1}},\mathtt{1}} \Phi_{\mtm}(\t)  & \dots & \Delta^{r_{\mtt},\mtt} \Phi_{\mtm}(\t) \\
        \end{bmatrix}.
   \end{equation*}
\end{definition}

Equivalently, the Jacobian  can be written as 
    $
        \Delta^{\r} \bm\Phi(\t)
        =[\bm\Phi(\t+r_{\mathtt{1}}\e_{\mathtt{1}}), \ldots, \bm\Phi(\t+r_{\mtt}\e_{\mtt})]
        - [\bm\Phi(\t), \ldots, \bm\Phi(\t)].
    $
Similar to continuous-space Jacobian, the discrete Jacobian can be applied to a multivariate function.
Then the notation $\Delta^{\r}_{\t}$ is used to indicate that the Jacobian is applied with respect to the variable $\t$.
The need of having a non-singular Jacobian in the algebraic identifiability definitions in the continuous case is to use the Implicit Function Theorem.
The idea is that for arbitrary points $\t'$ in a small enough neighborhood of some reference point $\t=\bm{0}$, where the Jacobian is non-singular, Taylor's expansion can be used to approximate $\bm\Phi(\t')$ as $\bm\Phi(\bm{0})$ plus the Jacobian times $\t'$, and by taking the inverse of the Jacobian, $\t'$ can be written in terms of $\bm\Phi(\t')$, implying one-to-oneness in that neighborhood. 

In the discrete space, however, the higher order terms in Taylor's expansion can not be generally ignored, because the neighborhood may not be small enough.
Nevertheless, if $\bm\Phi$ is separable, then we can write $\bm\Phi(\t')$ in exact terms of $\bm\Phi(\t)$ for an arbitrary $\t$, which can result in the injectivity of $\bm\Phi$ as stated in \cref{lemma_oneToOneMapping}.
In what follows, let $\bm{0}$ (resp. $\bm{1}$) be the all-zero (resp. one) vector with the appropriate dimension.
\begin{definition}[\cite{viazminsky2008necessary}]
\label{defintion_SeparableFunctions}
    Function $\bm\Phi:\D\to\R^{\mtm},\D\subseteq \R^{\mtt},\mtm,\mtt\in\Z_{\geq1},$ is \emph{(additively) separable} if there exist $\mtt$ functions 
    $\g_i:\R\to\R^{\mtm}$, $i=\mathtt{1},\ldots,\mtt$, 
    such that    $\bm\Phi(\t)=\sum_{i=\mathtt{1}}^{\mtt}\g_i(\theta_i)$ for all $\t\in\D$.
\end{definition}
\begin{lemma}\label{lemma_oneToOneMapping}
    Consider an additively separable function $\bm\Phi:\D\to\R^{\mtm},\D\subseteq \R^{\mtt},\mtm,\mtt\in\Z_{\geq1}$. 
    For all $\t \in \D$ and
    $\r\in\R^{\mtt}\setminus\{\bm{0}\}$ satisfying $\t+\r\in \D$,
    $\Delta^{\r}\bm\Phi(\t) \bm{1} \neq \bm{0}$
    if and only if  $\bm\Phi$ is \textbf{one-to-one} on $\D$.
\end{lemma}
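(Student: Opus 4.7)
The plan is to leverage additive separability to collapse $\Delta^{\r}\bm\Phi(\t)\bm{1}$ into the single finite difference $\bm\Phi(\t+\r)-\bm\Phi(\t)$, after which the equivalence with injectivity is immediate from definitions.

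First I would expand the matrix-vector product entry-wise. Writing $\bm\Phi(\t)=\sum_{i=\mathtt{1}}^{\mtt}\g_i(\theta_i)$ with components $g_{i,j}$, the $(j,i)$ entry of $\Delta^{\r}\bm\Phi(\t)$ is
\begin{equation*}
\Delta^{r_i,i}\Phi_j(\t)=\Phi_j(\t+r_i\e_i)-\Phi_j(\t)=g_{i,j}(\theta_i+r_i)-g_{i,j}(\theta_i),
\end{equation*}
where the last equality is precisely where separability is used: perturbing only the $i$-th coordinate changes only the $i$-th summand $\g_i$. Multiplying by $\bm{1}$ sums these entries over $i$ for each row $j$, giving
\begin{equation*}
\bigl[\Delta^{\r}\bm\Phi(\t)\bm{1}\bigr]_j=\sum_{i=\mathtt{1}}^{\mtt}\bigl(g_{i,j}(\theta_i+r_i)-g_{i,j}(\theta_i)\bigr)=\Phi_j(\t+\r)-\Phi_j(\t),
\end{equation*}
where the last step again invokes separability (the sum of the individual per-coordinate changes equals the full change $\bm\Phi(\t+\r)-\bm\Phi(\t)$). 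Hence $\Delta^{\r}\bm\Phi(\t)\bm{1}=\bm\Phi(\t+\r)-\bm\Phi(\t)$.

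With this identity in hand, both directions are direct. For ($\Rightarrow$), assume $\Delta^{\r}\bm\Phi(\t)\bm{1}\neq\bm{0}$ for every admissible $\t$ and every nonzero $\r$ with $\t+\r\in\D$. Given distinct $\t,\t'\in\D$, set $\r=\t'-\t\neq\bm{0}$; then $\bm\Phi(\t')-\bm\Phi(\t)=\Delta^{\r}\bm\Phi(\t)\bm{1}\neq\bm{0}$, so $\bm\Phi(\t')\neq\bm\Phi(\t)$, proving injectivity. For ($\Leftarrow$), if $\bm\Phi$ is one-to-one on $\D$ and $\r\neq\bm{0}$ with $\t+\r\in\D$, then $\t\neq\t+\r$ forces $\bm\Phi(\t+\r)\neq\bm\Phi(\t)$, and by the identity $\Delta^{\r}\bm\Phi(\t)\bm{1}\neq\bm{0}$.

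There is no real obstacle here; the only subtlety is recognizing that the discrete Jacobian against the all-ones vector telescopes into a single difference exactly when the cross-terms vanish, which is precisely what additive separability guarantees. The argument would fail without separability because in general $\bm\Phi(\t+\r)-\bm\Phi(\t)$ cannot be decomposed as a sum of one-coordinate-at-a-time differences, and no Taylor-type remainder is available to absorb the discrepancy in the discrete setting.
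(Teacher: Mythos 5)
Your proposal is correct and follows essentially the same route as the paper: both establish the key identity $\Delta^{\r}\bm\Phi(\t)\bm{1}=\bm\Phi(\t+\r)-\bm\Phi(\t)$ from additive separability (you do it entry-wise, the paper at the vector level) and then read off both implications directly.
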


See the Apendix the proof.
Using the discrete Jacobian, we provide the discrete version of algebraic identifiability defined in \cite[Definition 5]{nomm2016further}.
\begin{definition}[in the sense of \cite{nomm2016further}]
    \label{def_AlgId}
    System \eqref{equation_model} is algebraically identifiable
    if there exist a positive integer $T$, subsets 
    $\U' \subset \U^{T}$,
    $\mathbf{\Theta}' \subset \mathbf{\Theta}$, 
    and an additively separable function $\bm \Phi: \bm{\Theta} \times \U'\times \Y^{T+1}  \rightarrow \R^{\mtt}$ with respect to its first argument,
     such that
            $ \bm \Phi(\t, \u^*,\langle \y(t,\u^*,\t) \rangle_0^T)=\bm{0}$ and  
            $\Delta^{\r}_{\t} \bm \Phi \bm{1} \neq \bm{0}$ 
        for all $(\t, \u^*) \in \bm\Theta' \times \U'$ 
        where $\u^*=\langle\u(t)\rangle_0^{T-1}$, 
        and for all $\r\in\R^{\mtt}\setminus\{\bm{0}\}$ satisfying $\t+\r \in \mathbf{\Theta}'$.
\end{definition}

The separability condition on $\bm\Phi$ means that  $\bm\Phi(\t,\cdot,\cdot)$ is additively separable with respect to $\t$.
The Jacobian approach in \cref{def_AlgId} may not be always of a great use as sometimes it requires the same computational effort compared to when injectivity is verified exhaustively for every point in the domain of $\bm\Phi$. 
Moreover, it can be applied only to additively separable functions.
These caveats motivate us to provide the following third definition of algebraic identifiability.

\subsection{Algebraic identifiability based on injectivity}

\begin{definition}
\label{definition_Alg_ID with 1-1}
    System \eqref{equation_model} is \textbf{algebraically identifiable} if there exist a positive integer $T$, subsets 
    $\U' \subset \U^T$,
    $\mathbf{\Theta}' \subset \mathbf{\Theta}$, 
    and a function 
    $\bm \Phi: \bm{\Theta} \times \U' \times \Y^{T+1}  \rightarrow \R^{\mtt}$ such that for all $(\t, \u^*) \in \mathbf{\Theta}'\times\U'$,
    $
             \bm \Phi(\t, \u^*,\langle \y(t,\u^*,\t) \rangle_0^T)=\bm{0} 
    $
     and for all $(\u^*,\y^*)\in\U'\times\Y^{T+1}$,
    $\bm \Phi(\cdot, \u^*,\y^*)$ \ is one-to-one in  $\bm\Theta'$.
\end{definition} 

The one-to-one condition guarantees the existence of an inverse function that allows to obtain $\t$ uniquely in terms of inputs and outputs and their time iterations.

    When the initial conditions are known, \cref{def_AlgId,definition_Alg_ID with 1-1} change to the so-called \textbf{algebraic identifiability with known initial conditions} \cite{nomm2016further}.
    The following extends \cref{definition_Alg_ID with 1-1}.
\begin{definition}\label{definition_Alg_ID with initials and 1-1}
    System \eqref{equation_model} is \textbf{algebraically identifiable with known initial conditions} if there exists a positive integer $T$, subsets 
    $\X' \subset \X$, 
    $\U' \subset \U^{T}$, and
    $\mathbf{\Theta}' \subset \mathbf{\Theta}$, 
    and a function 
    $\bm \Phi: \bm{\Theta} \times \X' \times \U' \times \Y^{T+1}  \rightarrow \R^{\mtt}$
    such that for all $(\t,\x_0,\u^*)\in\mathbf{\Theta}'\times\X'\times\U'$,
        \begin{equation}
            \label{equation_Alg-Id-with initial state}
             \bm \Phi(\t, \x_0, \u^*,\langle \y(t,\x_0,\u^*,\t) \rangle_0^T)=\bm{0} 
        \end{equation}
        and for all $(\x_0,\u^*,\y^*)\in\X'\times\U'\times\Y^{T+1},$
            $\bm \Phi(\cdot, \x_0, \u^*,\y^*)$ is one-to-one in $\bm\Theta'$.
\end{definition}

\setcounter{example}{0}
\begin{example}[revisited]
\label{ex_AlgID_Using_Jacobian_For_Coordinating_Example}
    Let $n=4$ and consider an initial condition such that $y(0) = \tfrac{1}{4}$.
    The input-output equation is
    \begin{equation}\label{eq_InputOutputRelationOfTheExample}
        y(t)=\theta_1 h(y(t-1);\tfrac{1}{4})+ \theta_2 h(y(t-1);\tfrac{1}{2}),
    \end{equation}
    When $\theta_1\geq\frac{1}{2}$, we have $y(1)=\theta_1$ and $y(2)=\theta_1+\theta_2$.
    Hence, the function 
    $\bm \Phi: \bm{\Theta} \times \Y^{3}  \rightarrow \R^{\mtt}$ with 
    $
        \bm \Phi(\t,\langle y(t) \rangle_0^2)
        =[
            \theta_1-y(1),
            \theta_1+\theta_2 - y(2)
       ]^\top
    $ 
    is additively separable w.r.t. $\t$, and
     \vspace{-7pt}
    \[
        \Delta^{\r}_{\t}\bm\Phi (\t,\langle y(t) \rangle_0^2)\bm{1}=
        \begin{bmatrix}
        r_1 & 0 \\
        r_1 & r_2\\
        \end{bmatrix}
        \begin{bmatrix}
            1 \\ 1
        \end{bmatrix}
        \neq \bm {0}
    \]
    for all $\bm {r} \neq \bm 0$.
    Hence, the system is algebraically identifiable with known initial conditions.
    When $\theta_1< \frac{1}{2}$, the result still holds as the function reduced to $
        \bm \Phi(\t,\langle y(t) \rangle_0^2)
        =[
            \theta_1-y(1),
            \theta_1 - y(2)
       ]^\top
    $ with the discrete Jacobian
    $ \begin{bmatrix}
        r_1 & 0 \\
        r_1 & 0\\
        \end{bmatrix}$.
\end{example}
\begin{remark} \label{re_equivalenceOfDefinitions}     
    In view of \cref{lemma_oneToOneMapping}, \cref{def_AlgId} implies \cref{definition_Alg_ID with 1-1}, but not vice versa, because $\bm\Phi$ in \cref{definition_Alg_ID with 1-1} does not need to be separable.
\end{remark} 
The main challenge in algebraic approaches is deriving an input-output equation.
Besides ad-hoc substitutions and iterations 
to exclude the state variable, as described in \cite{eisenberg2013input}, Ritt's algorithm and Groebner bases approach can be used when system \eqref{equation_model} consists of rational functions.
\cite{ljung1994global,anstett2006chaotic,calandrini1997ritt}.\\

\textbf{Step 1}: \textit{Iterate the system equations \eqref{equation_model} to obtain a number of equations strictly greater than the number of state variables and their time iterates used in \eqref{equation_model}.}\\
\textbf{Step 2}: \textit{Calculate the Groebner basis with the \emph{lexicographic ordering}
$\u(t) \prec  \ldots \prec \u(t+\alpha) \prec \y(t) \prec \ldots \prec \y(t+\beta) \prec \x(t) \prec\ldots \prec\x(t+\gamma)$ to eliminate state variable $\x$ and obtain the input-output relation.
This can be done using software such as Maxima and Maple \cite{anstett2006chaotic}.}

\begin{remark}\label{re_localObservability}
Examining the \textit{Local observability condition} is popular in identifiability analysis tools developed for both continuous and discrete-time systems \cite{anstett2020priori,sciml}.
This criterion rests on the assumption that the output function is smooth w.r.t the state variables, which is not required in  the proposed approaches in \cref{def_AlgId,definition_Alg_ID with 1-1,definition_Alg_ID with initials and 1-1}, and \cref{Theorem-input-output-relation}.
\end{remark}
\vspace{-1pt}
\section{Results}\label{sec_results}
\subsection{Local result}
The following theorem links the notion of algebraic identifiability to that of local structural identifiability.

\begin{theorem}\label{theorem_relation between ALg Id and structural Id} 
    System \eqref{equation_model} is locally structurally identifiable if and only if it is algebraically identifiable with known initial conditions (defined by \cref{definition_Alg_ID with initials and 1-1}).
\end{theorem}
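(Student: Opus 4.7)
My plan is to prove the equivalence by passing through the injectivity of the output map $\psi_{\x_0,\u^*}:\mathbf{\Theta}\to\Y^{T+1}$ defined by $\psi_{\x_0,\u^*}(\t):=\langle\y(t,\x_0,\u^*,\t)\rangle_0^T$. Local strong $\x_0$-identifiability at $\t$ through $\u^*$ (\cref{de_identifiabilityThrough}) is, by unpacking the contrapositive of \eqref{Eq_distinguishability}, nothing other than the injectivity of $\psi_{\x_0,\u^*}$ restricted to a neighborhood of $\t$, so both sides of the equivalence can be rephrased in terms of this map.

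For the ``only if'' direction, I would use the data $(T,\mathbf{\Theta}_0,\X_0,\U_0)$ guaranteed by \cref{definition_StructuralID}, pick a specific triple $(\t^\star,\x_0^\star,\u^\star)$ inside these sets at which the local strong identifiability actually holds, and let $\mathbf{\Theta}^\dagger\subset\mathbf{\Theta}$ be the corresponding neighborhood of $\t^\star$ on which $\psi_{\x_0^\star,\u^\star}$ is injective. Setting $\mathbf{\Theta}':=\mathbf{\Theta}^\dagger$, $\X':=\{\x_0^\star\}$, and $\U':=\{\u^\star\}$, I would construct
\[
\bm\Phi(\t,\x_0,\u^*,\y^*):=\t-G(\x_0,\u^*,\y^*),
\]
where $G$ equals $\psi_{\x_0,\u^*}^{-1}(\y^*)$ on the image of $\psi_{\x_0,\u^*}$ restricted to $\mathbf{\Theta}'$ (well-defined by injectivity) and is extended to an arbitrary fixed element of $\mathbf{\Theta}'$ elsewhere on $\Y^{T+1}$. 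This $\bm\Phi$ is affine in $\t$ and hence one-to-one in $\t$ for every fixed $(\x_0,\u^*,\y^*)$, and it vanishes on the actual output trajectory by construction, meeting every clause of \cref{definition_Alg_ID with initials and 1-1}.

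For the ``if'' direction, I would take $(T,\mathbf{\Theta}',\X',\U',\bm\Phi)$ from \cref{definition_Alg_ID with initials and 1-1} and argue that $\mathbf{\Theta}'$ itself serves as the neighborhood required by \cref{de_identifiabilityThrough} for every $(\t,\x_0,\u^*)\in\mathbf{\Theta}'\times\X'\times\U'$. Concretely, if $\t,\t''\in\mathbf{\Theta}'$ produce the same output sequence $\y^*$ under $(\x_0,\u^*)$, then \eqref{equation_Alg-Id-with initial state} yields $\bm\Phi(\t,\x_0,\u^*,\y^*)=\bm 0=\bm\Phi(\t'',\x_0,\u^*,\y^*)$, and the one-to-one clause forces $\t=\t''$; the contrapositive is exactly \eqref{Eq_distinguishability}. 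Local structural identifiability then follows with the same subsets.

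The main obstacle I expect is reconciling the ``almost all'' quantifier in \cref{definition_StructuralID} with the ``for all'' quantifier in \cref{definition_Alg_ID with initials and 1-1}. In the only-if direction I sidestep it by collapsing $\X'$ and $\U'$ to singletons at a verified good triple, which \cref{definition_Alg_ID with initials and 1-1} permits since it only asks for the existence of suitable subsets. A secondary concern is interpreting $\mathbf{\Theta}'$ as a ``neighborhood'' of each of its points in the if-direction, which is consistent with the liberal, topology-dependent notion of a discrete neighborhood discussed in \cref{sec_problemFormulation}.
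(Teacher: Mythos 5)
Your proposal is correct and follows essentially the same route as the paper's proof: both directions pass through the injectivity of the output map and, for sufficiency, construct $\bm\Phi(\t,\x_0,\u^*,\y^*)=\t-\psi_{\x_0,\u^*}^{-1}(\y^*)$, which is affine and hence one-to-one in $\t$. Your version is slightly more careful than the paper's in two bookkeeping respects (extending the inverse off the image of $\psi$ so that $\bm\Phi$ is defined on all of $\Y^{T+1}$, and collapsing $\X'$ and $\U'$ to singletons to reconcile the ``almost all'' quantifier), but the underlying argument is identical.
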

\begin{proof} 
    \textit{(sufficiency)}
    Since the system is structurally identifiable,
    there exists 
    a time $T>0$ 
    and subsets 
    $\mathbf{\Theta}'\subseteq\mathbf{\Theta}$, 
    $\X'\subseteq\X$, and
    $\U'\subseteq\U^T$,
    such that 
    the outputs of any distinct $\t, \t' \in \mathbf{\Theta}'$ are different at time $T$, i.e., 
    $ \y(\f^T(\x_0,\u(0),\dots,\u(T-1)),\t)
    \neq\y(\f^T(\x_0,\u(0),\dots,\u(T-1)),\t')$,
    for every initial condition $\x_0 \in \X'$ and input sequence
    $\langle\u(t)\rangle_0^{T-1}\in\U'$.
    Thus, function $\bm\Psi:\mathbf{\Theta}'\times\X'\times\U' \to \Y^{T+1}$ defined by 
    $
    \bm\Psi(\t,\x_0,\langle\u(t)\rangle_0^{T-1})
     =
    [
         \y(\x_0,\t),
         \y(\f(\x_0,\u(0)),\t),
         \ldots,
         \y(\f^T(\x_0,\u(0),\dots,\u(T-1)),\t)
    ]^\top
    $
    is one-to-one w.r.t $\t$ in $\bm\Theta'$.
    Thus, for every $(\x_0,\u^*) \in \X'\times\U'$,
    where $\u^*=\langle\u(t)\rangle_0^{T-1}$, there exists the one-to-one function $\bm g_{\x_0,\u^*}: \bm\Theta'\to\Y^{T+1}$ with 
    $\bm g(\bm \theta)=\bm\Psi(\bm\theta,\x_0,\u^*)$ 
    Hence, the inverse function $\bm g_{\x_0,\u^*}^{-1}: \Y^{T+1}\to \bm\Theta'$
    exists, where 
    $\bm{g}^{-1}_{\x_0,\u^*}(\y^*) = \t$
    for $\y^* = \langle\y(t,\u^*,\t)\rangle_0^T$.
    Now consider the function 
    $\bm \Phi: \bm{\Theta} \times \X\times \U' \times \Y^{T+1}  \rightarrow \R^{\mtt}$ defined by
    $
        \bm\Phi(\t,\x_0,\u^*,\y^*) 
            = \t 
            - \bm{g}^{-1}_{\x_0,\u^*}(\y^*).
    $
    Clearly, \eqref{equation_Alg-Id-with initial state} is satisfied when $\y^*$ satisfies the dynamics of System \eqref{equation_model}.
    On the other hand, $\bm\Phi$ is one-to-one w.r.t. $\t$, because for distinct $\t,\t'\in\bm\Theta'$, the equality $\t - \bm{g}^{-1}_{\x_0,\u^*}(\y^*) = \t' - \bm{g}^{-1}_{\x_0,\u^*}(\y^*)$ implies $\t=\t'$ for all $(\x_0,\u^*,\y^*)\in\X\times\U'\times\Y^{T+1}$.
    \textit{(necessity)}
    Since the system is algebraically identifiable with known initial conditions, there exists a positive integer $T$, subsets
    $\X' \subset \X$,
    $\U' \subset \U^{T}$, 
    $\mathbf{\Theta}' \subset \mathbf{\Theta}$, 
    and a function $\bm\Phi$, satisfying \eqref{equation_Alg-Id-with initial state} and one-to-one in $\mathbf{\Theta}'$.
    Consider parameters $\t,\t'\in\bm\Theta'$. 
    Should they result in the same output sequence, i.e., $\langle\y(t,\x_0,\u^*,\t)\rangle_0^T = \langle\y(t,\x_0,\u^*,\t')\rangle_0^T = \y^*$ for some input sequence $\u^*\in\U'$ and initial condition $\x_0\in\X'$, it follows that 
    $\bm\Phi(\t,\x_0,\u^*,\y^*) = \bm\Phi(\t',\x_0,\u^*,\y^*) = 0$.
    Since $\bm\Phi$ is one-to-one w.r.t. $\t$ in $\bm\Theta'$, we have that $\t = \t'$.
    Thus, the system is locally structurally identifiable.
\end{proof}

\setcounter{example}{0}
\begin{example}[revisited]\label{ex_ForLocalResults}
    We showed that system \eqref{eq:Coordinators_Dynamics} is locally structurally identifiable and algebraically identifiable with known initial conditions, illustrating \cref{theorem_relation between ALg Id and structural Id}.
\end{example}

\subsection{Global results}
If system \eqref{equation_model} satisfies a set of linear regressions w.r.t. $\t$ where the regressors are in terms of inputs, outputs, and their iterations, global identifiability is guaranteed.
The following is an extension of the results in \cite{anstett2006chaotic} for discrete-time systems, which itself was an extension of the work in \cite{ljung1994global}.
The proof is straightforward, which is perhaps why it was stated as a definition in \cite{anstett2020priori}.
\begin{proposition}\label{theorem_globalIdentifiability}
    System \eqref{equation_model} is \textbf{globally identifiable} if 
   it satisfies a set of $\mtt$ linear equations
   \vspace{-6pt}
   \begin{equation}\label{equation_LinearRegression}
        \scalebox{0.91}{
        $ P_i(\langle \u(t)\rangle_0^{T-1}, \langle \y(t)\rangle_0^T)\theta_i- Q_i(\langle \u(t)\rangle_0^{T-1}, \langle \y(t)\rangle_0^T)=0 $ 
        }
    \end{equation}
    for $i = 1,\dots,\mtt$, where $Q_i:\U^T \times \Y^{T+1} \to \R$, $P_i:\U^T \times \Y^{T+1} \to \R$, and $P_i$s are not identically zero.
\end{proposition}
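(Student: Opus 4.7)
The plan is to exploit the fact that \eqref{equation_LinearRegression} is, by construction, scalar-linear in each $\theta_i$ and can therefore be inverted whenever the regressor $P_i$ does not vanish along the observed trajectory. Global identifiability at a given $\t\in\bm\Theta$ requires me to exhibit a single input sequence $\u^{*}\in\U^{T}$ which distinguishes $\t$ from every other $\t'\in\bm\Theta$ for every initial condition $\x_0\in\X$, so the entire argument reduces to producing such a $\u^{*}$ and then reading off $\theta_i$ from \eqref{equation_LinearRegression}.

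First I would fix an arbitrary $\t\in\bm\Theta$ and an arbitrary $\x_0\in\X$. Since each $P_i$ is not identically zero as a function of $(\langle\u(t)\rangle_0^{T-1},\langle\y(t)\rangle_0^{T})$, and the output sequence is uniquely determined by $\x_0,\u^{*},\t$ through Remark~\ref{re_theDomainofUinLocallyStrongIdentifiability}, there exists some input sequence $\u^{*}\in\U^{T}$ such that
\[
P_i\bigl(\u^{*},\langle\y(t,\x_0,\u^{*},\t)\rangle_0^{T}\bigr)\neq 0\qquad\text{for every }i=1,\dots,\mtt.
\]
(If needed I would pick $\u^{*}$ separately for each $i$ and then argue that finitely many non-vanishing conditions can be met simultaneously, since each is avoiding a proper zero set; this is the one step where a little care is required.) Suppose now that some $\t'\in\bm\Theta$ produces the same output sequence, i.e.\ $\langle\y(t,\x_0,\u^{*},\t')\rangle_0^{T}=\langle\y(t,\x_0,\u^{*},\t)\rangle_0^{T}$. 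Then the coefficients $P_i$ and $Q_i$ in \eqref{equation_LinearRegression} take identical values for $\t$ and $\t'$, so both must satisfy the same linear equation; dividing by the nonzero $P_i$ yields
\[
\theta_i \;=\; \frac{Q_i(\u^{*},\langle\y(t)\rangle_0^{T})}{P_i(\u^{*},\langle\y(t)\rangle_0^{T})} \;=\; \theta'_i,\qquad i=1,\dots,\mtt,
\]
hence $\t=\t'$. This is the contrapositive of the output-distinguishability requirement in the definition of global identifiability, so the conclusion follows.

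The main obstacle I anticipate is the joint non-vanishing step: guaranteeing a \emph{single} $\u^{*}$ for which all $\mtt$ regressors $P_i$ are simultaneously nonzero along the $\t$-trajectory. In a continuous setting this would be immediate because finite unions of proper zero sets are meagre, but in a discrete space the argument is slightly more delicate. I would handle it by noting that the admissible input set $\U^T$ is rich enough to realise any single $P_i\neq 0$ (by hypothesis), and that the set of $\u^{*}$ violating $P_i\neq 0$ along the induced output is a proper subset; the intersection over $i=1,\dots,\mtt$ therefore still leaves a valid input. Once this selection is justified, the remainder is the one-line algebraic inversion above.
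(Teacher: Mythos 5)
Your core argument is exactly the one the paper has in mind---indeed the paper supplies no proof at all, remarking only that ``the proof is straightforward''---and the intended content is precisely your one-line inversion: if $\t$ and $\t'$ produce the same input--output data under $\u^*$, then $P_i$ and $Q_i$ take identical values for both (they depend only on the observed data), so $P_i\theta_i=Q_i=P_i\theta_i'$ and, wherever $P_i\neq 0$, $\theta_i=\theta_i'$. That part is correct and matches the paper's intent.

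The gap is the one you yourself flag, and the patch you sketch does not close it. The hypothesis says only that $P_i$ is not identically zero as a function on $\U^T\times\Y^{T+1}$; it does not say that $P_i$ is nonzero at some \emph{realizable} pair $(\u^*,\langle\y(t,\x_0,\u^*,\t)\rangle_0^T)$. The output argument of $P_i$ is not a free variable you can tune---it is pinned down by $(\x_0,\u^*,\t)$---so $P_i$ may be nonzero somewhere on the abstract domain yet vanish along every trajectory the system can actually generate, in which case the relation $P_i\theta_i-Q_i=0$ carries no information about $\theta_i$ and the conclusion fails. Your ``proper zero set'' argument for intersecting the $\mtt$ non-vanishing conditions is a continuous-space genericity argument; in a discrete space---one of the paper's own recurring points---there is no measure or meagreness structure to invoke, and a finite intersection of ``proper subsets'' of $\U^T$ can perfectly well be empty. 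There is also a quantifier-order slip: in the paper's definition of global identifiability the single input sequence $\u^*$ must work for \emph{all} $\x_0\in\X$ simultaneously, whereas you select $\u^*$ after fixing $\x_0$. A clean repair is to read the hypothesis as the proposition evidently intends: there exist one $T$ and one $\u^*$ such that $P_i(\u^*,\langle\y(t,\x_0,\u^*,\t)\rangle_0^T)\neq 0$ for every $i$, every $\x_0\in\X$, and every $\t\in\bm\Theta$; under that reading your inversion argument is complete as written.
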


Obtaining the set of equations \eqref{equation_LinearRegression} is not always guaranteed. 
Assume that, instead, by excluding the state variables in \eqref{equation_model}, we obtain the following input-output equation:
\vspace{-3pt}
\begin{equation}
    \label{equation_Alpha-Input-output-Eq}   
    \textstyle
    \sum_{i=0}^r\alpha_i(\t)\phi_i(\langle \u(t)\rangle_0^{T-1}, \langle \y(t)\rangle_0^T)=0
    \end{equation}
where $\phi_i:\U^T \times \Y^{T+1} \to \R$, $i=0,\ldots, r$, $r\in\Z_{\geq1}$, is a function of inputs, outputs, and their iterations for some large enough $T$, and  $\alpha_i:\mathbf{\Theta} \to  \R$, $i = 0,\ldots,r$, is the coefficient map of $\phi_i$  with $\alpha_0 = 1$.
By iterating \eqref{equation_Alpha-Input-output-Eq} over time for $r-1$ times, we obtain the system of linear equations
\vspace{-2pt}
\begin{equation}
\label{equation_Rearranged_Alpha_Input-Output-Eq}
    \scalebox{.85}{$
\underbrace{\begin{bmatrix}
    \phi_1^0 & \dots &  \phi_r^0\\
    \vdots &\ddots&\vdots \\ 
    \phi_1^{r-1} & \dots &  \phi_r^{r-1}\\
    \end{bmatrix}}_{\A} 
    \underbrace{
    \begin{bmatrix}
        \alpha_1\\ \vdots \\ \alpha_r
    \end{bmatrix}}_{\a(\t)}
    = -          
    \underbrace{
    \begin{bmatrix}
    \phi_0^0\\
    \vdots\\
    \phi_0^{r-1}
    \end{bmatrix}}_{\b},
    $}
\end{equation}
where  $\phi_i^s=\phi_i(\langle \u(t)\rangle_s^{T-1+s}, \langle \y(t)\rangle_s^{T+s})$ for $s=0,\ldots,r-1$.
If $\det \A \neq 0$,
the coefficients can then be determined uniquely as $\a(\t) = -\A^{-1}\b$, where $\a$ is known as the \textit{exhaustive summary}.
Thus, the identifiability problem is reduced to checking the injectivity of the exhaustive summary \cite{saccomani2003parameter}.
In practice, due to noise in data, the parameters may not be precisely estimated using \eqref{equation_Rearranged_Alpha_Input-Output-Eq}, but these estimates can provide good initial values for parameter optimization algorithms \cite{ljung1994global, verdiere2005identifiability}.
\begin{theorem} \label{Theorem-input-output-relation}
  System \eqref{equation_model} is \textbf{globally identifiable} if there is an input-output relation in the form of \eqref{equation_Alpha-Input-output-Eq} such that
  \begin{enumerate}
      \item the derived matrix $\A$ in \eqref{equation_Rearranged_Alpha_Input-Output-Eq} has full rank,
      and
      \item $\bm \a:\mathbf{\Theta} \rightarrow \R^r$ is one-to-one. 
  \end{enumerate}
\end{theorem}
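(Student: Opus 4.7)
The plan is to leverage the two hypotheses to recover $\t$ uniquely from any admissible input-output trace and thereby match the definition of global identifiability. First I would fix the time horizon $T$ as in \eqref{equation_Alpha-Input-output-Eq} and pick an input sequence $\u^* = \langle\u(t)\rangle_0^{T-1}$ under which the resulting matrix $\A$ in \eqref{equation_Rearranged_Alpha_Input-Output-Eq} is full rank; the existence of such a sequence is exactly hypothesis (1). Because $\A$ is $r \times r$, full rank means it is invertible.

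Next, toward contradiction, I would take two distinct parameter values $\t \neq \t'$ in $\mathbf{\Theta}$ and some $\x_0 \in \X$ for which the output sequences agree, $\langle \y(t,\x_0,\u^*,\t)\rangle_0^T = \langle \y(t,\x_0,\u^*,\t')\rangle_0^T$. The central observation is that $\A$ and $\b$ in \eqref{equation_Rearranged_Alpha_Input-Output-Eq} are assembled purely from the iterates of $\u^*$ and of the observed output sequence, with no explicit dependence on $\t$. Consequently, iterating \eqref{equation_Alpha-Input-output-Eq} forward in time $r-1$ steps for each parameter value yields the same linear system, so that $\A\,\a(\t) = -\b = \A\,\a(\t')$. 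Multiplying by $\A^{-1}$ (using hypothesis (1)) gives $\a(\t) = \a(\t')$, and injectivity of $\a$ from hypothesis (2) then forces $\t = \t'$, contradicting distinctness. Hence the chosen $\u^*$ distinguishes every pair of parameter values, which is the condition in the definition of global identifiability.

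A minor tidying point is to make sure the same $\u^*$ works uniformly over $\x_0 \in \X$: since $\A$ and $\b$ depend only on observables, once the rank condition holds along one admissible trace the reconstruction $\a(\t) = -\A^{-1}\b$ is available for any initial condition producing a well-defined output sequence, so the argument carries over without alteration. I do not anticipate a genuine obstacle in the proof itself; the conceptual cost of this theorem lies upstream, in \emph{producing} an input-output relation of the form \eqref{equation_Alpha-Input-output-Eq} (via iteration/elimination, Ritt's algorithm, or Gröbner bases as the text discusses), but since the existence of such a relation satisfying (1) and (2) is an explicit hypothesis, the proof proper reduces to the short linear-algebra argument above.
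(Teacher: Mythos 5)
Your argument is correct and follows essentially the same route the paper takes (the paper leaves the proof implicit in the discussion preceding the theorem: full rank of $\A$ yields the exhaustive summary uniquely as $\a(\t)=-\A^{-1}\b$ from observables alone, and injectivity of $\a$ then pins down $\t$). The only bookkeeping detail worth noting is that iterating \eqref{equation_Alpha-Input-output-Eq} $r-1$ times uses outputs up to time $T+r-1$, so the horizon in the global identifiability definition should be taken as $T+r-1$ rather than $T$.
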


\setcounter{example}{3}
\begin{example}\label{example_Theorem_2}
Consider the system given by 
$\x(t+1)=u^2(t)$ and $y(t)=\theta x^2(t)$
where 
$\x \in \Z$, 
$u \in \Z_{>0}$, $y\in \Z$, and 
$\t \in \Z$.
Clearly, $\A = u^2(0)$ is full rank and $\alpha_1=\theta$ is one-to-one.
Thus, the system is globally identifiable.
\end{example}

The reverse of \cref{Theorem-input-output-relation} does not always hold. Counter examples exist for multi-output systems, non-minimal parameter spaces, and systems for which matrix A cannot be constructed.

\setcounter{example}{4}
\begin{example}
\label{example_counter_example_for_Thm_2}
Consider the system 
\vspace{-5pt}
$$
\begin{cases}
    & \x(t+1)=[u(t) , 2u(t)]^\top \\
    & y(t)=\theta_1x_1(t)+\theta_2x_2(t)
\end{cases}   
$$
where 
$\x \in \Z^2$, 
$u \in Z$, $y\in \Z$, and 
$\t \in \mathds{R}^2_{\geq 0}$
with $\theta_1+\theta_2=1$.
By excluding the state variable and iterating over time we get
\vspace{-5pt}
\begin{equation*}
        \underbrace{
        \begin{bmatrix}
            u(1) & 2u(1)\\
            u(2) & 2u(2)
        \end{bmatrix}}_{\A}
        \underbrace{
        \begin{bmatrix}
            \alpha_1 \\ \alpha_2
        \end{bmatrix}}_{\a(\t)}
        =          
        \underbrace{
        \begin{bmatrix}
            y(2)\\
            y(3)
        \end{bmatrix}}_{\b},
    \end{equation*}
where $\alpha_1=\theta_1$ and $\alpha_2=\theta_2$.
Matrix $\A$ is not full rank; however, the system is globally identifiable as $\theta_1$ and $\theta_2$ can be uniquely obtained by substituting $\theta_1=1-\theta_2$.
\end{example}

\begin{example}
    Consider the following system with two states and two outputs and four parameters
    $$
    \begin{cases}
        &x_1(t+1) = u(t) +k_{12}x_2(t) - (k_{01} + k_{21})x_1(t) \\
       & x_2(t+1) = k_{21}x_1(t) - (k_{02} + k_{12})x_2(t) \\
       & y_1(t)  = x_1(t) \\
        &y_2(t)  = x_2(t).
    \end{cases}  
    $$
The system admits the following input-output equation
    \begin{equation}
        u(t) - k_{01} y_1(t) -y_1(t+1)  - k_{02} y_2(t) -y_2(t+1) = 0.
    \end{equation}
By iterating the input-output equation three times--to get as many equations as the number of parameters, the matrix $\A$ can be constructed.
Yet, matrix $\A$ is not full rank. 
However, it has been shown that the number of input-output equations is equal to the number of outputs\cite{eisenberg2013input}, and for this system are as follows
\begin{equation}
    \begin{aligned}
       - y_1(t+1) &+ u(t) +k_{12}y_2(t) - (k_{01} + k_{21})y_1(t) =0, \\
       - y_2(t+1) &+ k_{21}y_1(t) - (k_{02} + k_{12})y_2(t) =0.
    \end{aligned}
\end{equation}
 The coefficient of each of these two equations is globally identifiable \cite{eisenberg2013input}[Theorem 4.1]. 
The exhaustive summary would be then
$\alpha_1 = k_{12}$, $\alpha_2 = k_{01} + k_{21}$, $\alpha_3 = k_{21}$, and $\alpha_4 = k_{02} + k_{12}$. The injectivity of the exhaustive summary is immediate which implies the global identifiability of the parameters.    
\end{example}

\begin{example} Consider the system given by
    $$
    \begin{cases}
       &x(t+1) = x(t) -sgn(x(t) - \theta)u(t) \\\
        &y(t)=x(t) 
    \end{cases}
    $$
for $u,x,\theta \in \Z$.
This system can not be written in the format of \eqref{equation_Alpha-Input-output-Eq} and also in \eqref{equation_Rearranged_Alpha_Input-Output-Eq}. 
Thus, Theorem 2 can not be applied. 
But we can show that this system is identifiable analytically.
\end{example}

\begin{remark}\label{re_On_ID_Parameter_Combinations}
   Verifying the injectivity of the exhaustive summary is not straightforward. 
   If it is a rational function of the parameters, Buchberger's algorithm can be applied \cite{saccomani2003parameter}.
\end{remark}

\vspace{-4pt}
\section{Concluding remarks}
We extended identifiability notions from continuous to discrete-space systems. While analytical definitions easily transferred, algebraic definitions faced challenges due to the loss of differentiability. We presented three definitions for algebraic identifiability in discrete space, including one using the discrete Jacobian matrix, and discussed their relation to analytical definitions. We also presented global identifiability results, however, their utility may not always match that of continuous space.
 \vspace{-8pt}
\bibliographystyle{IEEEtran}
\bibliography{IEEEabrv,ref}

\section*{Appendix} 

\begin{proof}[Proof of Lemma 1]
\label{Proof_lemma 1}  
    Let $\t,\t' \in \bm\Theta$ and $\r=\t'-\t$.
    Since $\bm\Phi$ is separable, functions $\g_i$ exist as in \cref{defintion_SeparableFunctions}, such that
    \begin{align*}
        \bm\Phi(\t')
        &= \sum_{i=\mathtt{1}}^{\mtt} \g_i(\theta_i+r_i) \\
        & = \sum_{i=\mathtt{1}}^{\mtt} \g_i(\theta_i)+\sum_{i=\mathtt{1}}^{\mtt} \g_i(\theta_i+r_i) - \g_i(\theta_i) \\
        &= \bm\Phi(\t)\scalebox{0.85}{$
         +\sum_{i=\mathtt{1}}^{\mtt} \Big(\g_i(\theta_i+r_i) - \g_i(\theta_i)
          +\sum_{j\neq i} \g_j(\theta_j) - \g_j(\theta_j)\Big)$}\\
        &= \bm\Phi(\t) +\sum_{i=\mathtt{1}}^{\mtt} \big(\bm\Phi(\t+r_i\e_i) - \bm\Phi(\t)\big) \\
        &= \bm\Phi(\t) +\Delta^{\r} \bm\Phi(\t) \bm{1}.
    \end{align*}    
    (sufficiency)
    Assume on the contrary that there exists some $\r\neq \bm{0}$ such that $\Delta^{\r}\bm\Phi(\t) \bm{1} = \bm{0}$. 
    Then $\t' \neq \t$ but 
    $\bm\Phi(\t')=\bm\Phi(\t)$, which violates the one-to-one assumption.
    Thus, $\Delta^{\r}\bm\Phi(\t) \bm{1} \neq \bm{0}$ for all non-zero $\r$.
    (necessity)
    Should $\bm\Phi(\t')=\bm\Phi(\t)$,
    then $\Delta^{\r} \bm\Phi(\t) \bm{1} = \bm 0$, which implies that $\r=\bm{0}$ and in turn $\t'=\t$.
    Thus, $\bm\Phi$ is one-to-one.
\end{proof}

\end{document}